\documentclass[11pt,reqno]{amsart}

\usepackage{amsmath,amsthm,amsfonts,amssymb}
\usepackage[colorlinks]{hyperref}
\usepackage[T1]{fontenc}
\usepackage[mathscr]{eucal}

\newtheorem{theorem}{Theorem}[section]
\newtheorem{corollary}[theorem]{Corollary}
\newtheorem{definition}[theorem]{Definition}
\newtheorem{example}[theorem]{Example}
\newtheorem{lemma}[theorem]{Lemma}
\newtheorem{proposition}[theorem]{Proposition}
\newtheorem{remark}[theorem]{Remark}

\numberwithin{equation}{section}

\begin{document}

\title[Hardy-Rogers and Jungck type theorems]
{Hardy-Rogers and Jungck Type Fixed Point Theorems in Perturbed Metric Spaces, Stability and Data Dependence}

\date{}

\author[Du\v{s}an Bajovi\'c]{Du\v{s}an Bajovi\'c}
\author[Zoran Mitrovi\'c]{Zoran Mitrovi\'c}
\author[Boris Petkovi\'c]{Boris Petkovi\'c}

\begin{abstract} 
In this paper we establish Hardy-Rogers and Jungck type fixed point theorems in perturbed metric spaces, where the observed distance $D$ is separated from the exact metric $d$ by a nonnegative perturbation $P$. Rather than the uniform absorption of $P$ by $d$ required under domination, we examine a weaker demand, imposed only on the pairs of points that appear with a contractive coefficient. We show that without some such condition, and without a continuity hypothesis on $T$, a perturbed Banach contraction on a complete perturbed metric space may fail to have a fixed point. We further prove Ulam-Hyers stability, well-posedness, and data dependence results in which residuals are measured in the observed distance $D$, with all constants explicit, and we derive a priori error estimates for the Picard and Jungck iterations computable from observed data. The Jungck type theorem is established for weakly compatible pairs.
\end{abstract}

\maketitle

\textbf{Mathematics Subject Classification (2020):}
47H10, 54H25, 54E35.

\textbf{Key words and phrases:}
Perturbed metric space; Hardy-Rogers contraction; Jungck fixed point;
common fixed point; Ulam-Hyers stability; well-posedness; data dependence.

\section{Introduction}

Fixed point theory is one of the central tools of nonlinear analysis, with
applications to differential and integral equations, optimization, numerical
methods and dynamical systems. The Banach contraction principle
\cite{Banach1922} and its many extensions, for instance those due to Reich and
Hardy-Rogers \cite{Reich1972,HardyRogers1973}, give robust criteria under
which nonlinear mappings admit unique fixed points and simple iterative
schemes converge to them.

A recent direction in the metric approach to fixed points is the study of
perturbed metric spaces, introduced by Jleli and Samet \cite{JleliSamet2025}. In this setting the observed distance between two points is modeled as
\begin{equation}\label{eq:intro-D} D(x,y)=d(x,y)+P(x,y),
\end{equation}
where $d$ is a genuine metric and $P\ge 0$ is a perturbation term
representing, for example, measurement noise, discretization effects or
modeling error. The essential point is that convergence and completeness are
always understood with respect to the exact metric $d=D-P$, whereas the
contractive assumptions may be written in terms of the observed distance $D$.
The framework has attracted immediate attention. Nu\c{t}u and P\u{a}curar
\cite{NutuPacurar2025} studied $\varphi$-perturbed and Kannan-perturbed
mappings, Chatterjea-type conditions in a multiplicative variant of the
framework are considered in \cite{ArxivNewType}, perturbed $b$-metric and
perturbed extended $b$-metric spaces appear in \cite{ArxivPerturbedB}, a $C^{*}$-algebra-valued version is developed in \cite{Tijjani2025} and an
application to an integral equation is given by Dobri\c{t}oiu \cite{Dobritoiu2024}. See also the survey \cite{MoussaouiRadenovic2025} for generalized distances. \\

In general, $D$ does not need to be a metric. It may fail symmetry, the triangle
inequality, and even the vanishing of the diagonal, since
$D(x,x)=P(x,x)$ can be positive. Because of this, one cannot directly repeat
metric space proofs in terms of $D$ without additional assumptions. In
\cite{JleliSamet2025} the Banach-type theorem is proved under the assumption
that $T$ is continuous with respect to the exact metric. In this paper we
work instead under a coefficient weighted absorption condition, condition 
$(\mathrm{W})$ below (Definition~\ref{def:W}), whose simplest
instance is a domination condition of the form $D\le M\,d$. Both
conditions still permit nonmetric, nonsymmetric observed distances but
allow fixed point arguments to be carried out rigorously in the exact
metric space $(X,d)$, with no continuity assumption on $T$. Condition
$(\mathrm{W})$ has the additional feature that it constrains the perturbation
$P$ only on the pairs of points actually weighted by the contractive
coefficients, and is strictly weaker than domination.

Our contributions are the following.

\begin{enumerate}
	\item[(i)] We prove a Hardy-Rogers type theorem
	(Theorem~\ref{thm:HR}) under condition $(\mathrm{W})$ and the
	coefficient condition $(a+a_0)+(b+b_0)+(c+c_0)+(h+h_0)+(e+e_0)<1$;
	in the dominated case $D\le M\,d$ this reduces to $M(a+b+c+h+e)<1$
	(Corollary~\ref{cor:dominated}), which reproduces the full classical
	Hardy-Rogers condition \cite{HardyRogers1973} with the domination
	constant $M$ entering linearly. Condition $(\mathrm{W})$ is strictly
	weaker than domination (Remark~\ref{rem:W-weaker},
	Example~\ref{ex:strict}). The corresponding Reich, Kannan, Chatterjea
	and Banach type results follow as corollaries. An a priori error
	estimate for the Picard iteration, computable from the
	observed initial displacement $D(x_0,x_1)$, is derived
	(Proposition~\ref{prop:error}).
	\item[(ii)] We show by an explicit counterexample
	(Example~\ref{ex:counter}) that if both condition $(\mathrm{W})$ (in
	particular, the domination condition) and
	the continuity of $T$ are dropped, then a perturbed Banach contraction
	on a complete perturbed metric space may have no fixed point.
	Consequently, some hypothesis of this kind is indispensable. in
	particular, Theorem~2.2 of \cite{NutuPacurar2025}, which is stated
	without any such hypothesis, requires an additional assumption (see below
	Remark~\ref{rem:NP}).
	\item[(iii)] We initiate the quantitative stability theory of the fixed
	point problem in perturbed metric spaces. Ulam-Hyers stability with
	residuals measured in the observed distance $D$
	(Theorem~\ref{thm:UH}), well-posedness
	(Corollary~\ref{cor:wp}), and data dependence with the discrepancy of
	the operators measured in $D$ (Theorem~\ref{thm:dd}). All constants are explicit.
	\item[(iv)] We prove a Jungck type common fixed point theorem for weakly compatible pairs (Theorem~\ref{thm:Jungck}), which is
	strictly weaker than commutativity, together with an observed-data
	error estimate for the Jungck iteration
	(Proposition~\ref{prop:Jungck-error}).
\end{enumerate}

To the best of our knowledge, Hardy-Rogers and Jungck type theorems, as well
as Ulam-Hyers stability and data dependence, have not previously been
considered in perturbed metric spaces.

\section{Preliminaries}

We now recall the basic terminology used throughout.

\begin{definition}[Perturbed metric space \cite{JleliSamet2025}]\label{def:PMS}
	Let $D,P:X\times X\to [0,\infty)$.
	We say that $(X,D,P)$ is a perturbed metric space if
	\begin{equation}\label{eq:def-d}
		d(x,y):=D(x,y)-P(x,y)
	\end{equation}
	is a metric on $X$.
	The metric $d$ is called the exact metric.
\end{definition}

\begin{definition}\label{def:conv}
	Let $(X,D,P)$ be a perturbed metric space with exact metric $d=D-P$.
	\begin{itemize}
		\item[$(i)$] A sequence $(x_n)$ converges to $x$ if $d(x_n,x)\to 0$.
		\item[$(ii)$] $(x_n)$ is Cauchy if it is Cauchy in $(X,d)$.
		\item[$(iii)$] $(X,D,P)$ is complete if $(X,d)$ is complete.
	\end{itemize}
\end{definition}

\begin{definition}[Controlled observed distance]\label{def:controlled}
	Let $(X,D,P)$ be a perturbed metric space with exact metric $d=D-P$.
	We say that the observed distance $D$ is controlled by $d$ if
	there exists a constant $M\ge 1$ such that
	\begin{equation}\label{eq:controlled}
		D(x,y)\le M\,d(x,y),\qquad x,y\in X.
	\end{equation}
\end{definition}

Since $P\ge 0$, we always have $d\le D$. Hence, under
Definition~\ref{def:controlled},
\begin{equation}\label{eq:equiv}
	d(x,y)\le D(x,y)\le M\,d(x,y),\qquad x,y\in X.
\end{equation}
The following elementary observations clarify the exact scope of the
domination condition.

\begin{proposition}\label{prop:structure}
	Let $(X,D,P)$ be a perturbed metric space with exact metric $d$. The
	following statements are equivalent:
	\begin{itemize}
		\item[$(i)$] $D$ is controlled by $d$ with constant $M$;
		\item[$(ii)$] $P(x,y)\le (M-1)\,d(x,y)$ for all $x,y\in X$.
	\end{itemize}
	Moreover, if these hold, then:
	\begin{itemize}
		\item[$(a)$] $D(x,x)=0$ for all $x\in X$ (so the perturbation
		necessarily vanishes on the diagonal);
		\item[$(b)$] $D$ satisfies the relaxed triangle inequality
		$D(x,z)\le M\bigl(D(x,y)+D(y,z)\bigr)$ for all $x,y,z\in X$;
		\item[$(c)$] $D$ and $d$ determine the same convergent sequences
		and the same Cauchy sequences.
	\end{itemize}
\end{proposition}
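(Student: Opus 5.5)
The plan is to verify everything directly from the identity $D=d+P$ of Definition~\ref{def:PMS} together with the two-sided bound \eqref{eq:equiv}; no earlier results beyond these are needed.

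\emph{Equivalence of $(i)$ and $(ii)$.} Substituting $D=d+P$ into \eqref{eq:controlled} shows that $D(x,y)\le M\,d(x,y)$ holds if and only if $d(x,y)+P(x,y)\le M\,d(x,y)$, that is, $P(x,y)\le (M-1)\,d(x,y)$. Both implications follow at once, pointwise in $(x,y)$.

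\emph{Part $(a)$.} Since $d$ is a metric, $d(x,x)=0$. Then $(ii)$ gives $0\le P(x,x)\le 0$, so $P(x,x)=0$ and $D(x,x)=d(x,x)+P(x,x)=0$.

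\emph{Part $(b)$.} I chain \eqref{eq:equiv} with the triangle inequality for $d$:
\[
D(x,z)\le M\,d(x,z)\le M\bigl(d(x,y)+d(y,z)\bigr)\le M\bigl(D(x,y)+D(y,z)\bigr),
\]
where the last step uses $d\le D$, which holds because $P\ge 0$.

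\emph{Part $(c)$.} This is the only point that needs care, because $D$ is not symmetric and is not a metric. I will therefore define what convergence and Cauchy mean in terms of $D$ before proving anything.

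1. Say $x_n\to x$ in $D$ if $D(x_n,x)\to 0$. By \eqref{eq:equiv}, $d(x_n,x)\to0$ if and only if $D(x_n,x)\to0$.
2. Since $d$ is symmetric, \eqref{eq:equiv} also gives $D(x,x_n)\le M\,d(x_n,x)$. So the reversed-order condition $D(x,x_n)\to 0$ is equivalent as well, and the direction of the arguments causes no ambiguity.
3. Say $(x_n)$ is Cauchy in $D$ if $D(x_n,x_m)\to0$ as $n,m\to\infty$. This is equivalent to $d(x_n,x_m)\to0$ by the same squeeze $d\le D\le Md$, applied to the pairs $(x_n,x_m)$.

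Once the notions are stated this way, part $(c)$ follows immediately from \eqref{eq:equiv}.
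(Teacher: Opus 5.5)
Your proposal is correct and follows the paper's proof essentially step for step. The equivalence is the pointwise identity $D=d+P$, part $(b)$ is the same chain through $M\,d$ and the triangle inequality for $d$, and part $(c)$ is the squeeze $d\le D\le M\,d$. The differences are cosmetic: you get $(a)$ via $(ii)$ by forcing $P(x,x)=0$, whereas the paper reads it off directly from $D(x,x)\le M\,d(x,x)=0$. You also spell out the $D$-notions of convergence and Cauchy sequences in $(c)$, including the order of the arguments, which the paper leaves implicit.
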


\begin{proof}
	The equivalence of $(i)$ and $(ii)$ is immediate from $D=d+P$. For
	$(a)$, take $x=y$ in \eqref{eq:controlled} to get
	$D(x,x)\le M\,d(x,x)=0$. For $(b)$, using \eqref{eq:equiv} and the
	triangle inequality of $d$,
	\[
	D(x,z)\le M\,d(x,z)\le M\bigl(d(x,y)+d(y,z)\bigr)
	\le M\bigl(D(x,y)+D(y,z)\bigr).
	\]
	Claim $(c)$ follows directly from \eqref{eq:equiv}.
\end{proof}

\begin{remark}\label{rem:interpretation}
	Statement $(ii)$ of Proposition~\ref{prop:structure} gives the natural
	interpretation of the domination condition. Tthe perturbation is
	relatively bounded by the exact distance, with relative bound
	$M-1$. This is precisely the standard model of relative measurement
	error. An instrument (or a floating-point computation) that reports distances with relative error at most $\varepsilon$ produces an observed distance $D$ controlled by $d$ with $M=1+\varepsilon$. Note
	also that, by $(b)$, a controlled observed distance always satisfies a
	relaxed triangle inequality with coefficient $M$. When $P$ is
	symmetric (so $D$ is symmetric), $(a)$ and $(b)$ show that $D$ is then
	a genuine $b$-metric with coefficient $M$ in the sense of
	\cite{Czerwik1993}. When $P$ is not symmetric  as in
	Example~\ref{ex:HR-nonsym} below - $D$ satisfies (b3) but not (b2),
	so it is only quasi-$b$-metric with coefficient $M$. Tthe
	asymmetry of $P$ is exactly the source of the ``quasi''. In either
	case the framework is more special than a bare (quasi-)$b$-metric,
	since equivalence \eqref{eq:equiv} with a genuine metric $d$ is built
	in from the start.
\end{remark}

The domination condition constrains the perturbation $P$ uniformly on
all pairs of points. For the fixed point theorems below, this is more
than is needed. The contractive inequality weights only certain pairs, and it
suffices to absorb $P$ into $d$ on those pairs. This motivates the following
condition, which will be our standing hypothesis.

\begin{definition}[Condition $(\mathrm{W})$]\label{def:W}
	Let $(X,D,P)$ be a perturbed metric space with exact metric $d=D-P$,
	let $T:X\to X$ and let $a,b,c,h,e\ge 0$. We say that $T$ satisfies
	condition $(\mathrm{W})$ (with respect to the coefficients
	$a,b,c,h,e$) if there exist constants $a_0,b_0,c_0,h_0,e_0\ge0$ such
	that, for all $x,y\in X$,
	\begin{multline}\label{eq:W}
		a\,P(x,y)+b\,P(x,Tx)+c\,P(y,Ty)+h\,P(x,Ty)+e\,P(y,Tx)\\
		\le\ a_0\,d(x,y)+b_0\,d(x,Tx)+c_0\,d(y,Ty)+h_0\,d(x,Ty)+e_0\,d(y,Tx).
	\end{multline}
\end{definition}

Note that \eqref{eq:W} involves the perturbation $P$ only on the pairs that
carry a nonzero coefficient in the contractive condition. In particular, if
some of $a,b,c,h,e$ vanish, the corresponding pairs are entirely
unconstrained. No symmetry of $P$ (or of $D$) is assumed.

We shall repeatedly use the following conversion lemma.

\begin{lemma}\label{lem:conversion}
	Let $(X,D,P)$ be a perturbed metric space with exact metric $d$. Let
	$T:X\to X$ satisfy, for all $x,y\in X$,
	\begin{equation}\label{eq:HR-D}
		D(Tx,Ty)\le a\,D(x,y)+b\,D(x,Tx)+c\,D(y,Ty)+h\,D(x,Ty)+e\,D(y,Tx),
	\end{equation}
	where $a,b,c,h,e\ge 0$, together with condition $(\mathrm{W})$ with
	constants $a_0,b_0,c_0,$ $h_0,e_0\ge0$. Set
	\[
	A=a+a_0,\quad B=b+b_0,\quad C=c+c_0,\quad H=h+h_0,\quad E=e+e_0.
	\]
	Then, for all $x,y\in X$,
	\begin{equation}\label{eq:HR-d}
		d(Tx,Ty)\le A\,d(x,y)+B\,d(x,Tx)+C\,d(y,Ty)+H\,d(x,Ty)+E\,d(y,Tx),
	\end{equation}
	and also, applying \eqref{eq:HR-D} and \eqref{eq:W} to the ordered
	pair $(y,x)$ and using the symmetry of $d$,
	\begin{equation}\label{eq:HR-d-swap}
		d(Tx,Ty)\le A\,d(x,y)+B\,d(y,Ty)+C\,d(x,Tx)+H\,d(y,Tx)+E\,d(x,Ty).
	\end{equation}
\end{lemma}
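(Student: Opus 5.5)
The plan is to convert the inequality \eqref{eq:HR-D}, written in the observed distance, into one written purely in the exact metric. The two ingredients are the decomposition $D=d+P$ from \eqref{eq:def-d} and the nonnegativity of $P$.

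\textbf{Left-hand side.} Since $P\ge 0$, we have $d(Tx,Ty)\le D(Tx,Ty)$. So it suffices to bound $D(Tx,Ty)$ from above.

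\textbf{Right-hand side.} Fix $x,y\in X$ and substitute $D=d+P$ into every term of the right-hand side of \eqref{eq:HR-D}. This gives
\begin{multline*}
d(Tx,Ty)\le a\,d(x,y)+b\,d(x,Tx)+c\,d(y,Ty)+h\,d(x,Ty)+e\,d(y,Tx)\\
+\bigl[a\,P(x,y)+b\,P(x,Tx)+c\,P(y,Ty)+h\,P(x,Ty)+e\,P(y,Tx)\bigr].
\end{multline*}
The bracketed expression is exactly the left-hand side of \eqref{eq:W}. Replace it by the right-hand side of \eqref{eq:W} and collect the coefficients of each of the five exact distances. The result is \eqref{eq:HR-d}, with $A=a+a_0$, $B=b+b_0$, and so on. No triangle inequality and no symmetry of $D$ or $P$ is needed for this step, only the pointwise identity $D=d+P$.

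\textbf{Swapped inequality.} For \eqref{eq:HR-d-swap}, note that \eqref{eq:HR-D} and \eqref{eq:W} are assumed for all ordered pairs. So \eqref{eq:HR-d}, already proved, holds with $x$ and $y$ interchanged:
\[
d(Ty,Tx)\le A\,d(y,x)+B\,d(y,Ty)+C\,d(x,Tx)+H\,d(y,Tx)+E\,d(x,Ty).
\]
The exact metric $d$ is symmetric, so $d(Ty,Tx)=d(Tx,Ty)$ and $d(y,x)=d(x,y)$. Substituting these gives \eqref{eq:HR-d-swap}.

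There is no real obstacle here. The only point needing care is that symmetry is invoked only for $d$, never for $D$ or $P$, and that the coefficient bookkeeping pairs each $P$-term with the correct $d$-term in \eqref{eq:W}.
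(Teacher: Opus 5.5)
Your proposal is correct and matches the paper's proof. Both use $d\le D$ on the left, substitute $D=d+P$ termwise on the right, absorb the $P$-bracket via condition $(\mathrm{W})$, and then apply the same inequality to the ordered pair $(y,x)$, using only the symmetry of $d$.
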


\begin{proof}
	Since $P\ge0$ gives $d\le D$, and $D=d+P$ termwise on the right of
	\eqref{eq:HR-D},
	\begin{align*}
	d(Tx,Ty)&\le D(Tx,Ty)\\
	&\le a\,d(x,y)+b\,d(x,Tx)+c\,d(y,Ty)+h\,d(x,Ty)+e\,d(y,Tx)\\
	&\quad+a\,P(x,y)+b\,P(x,Tx)+c\,P(y,Ty)+h\,P(x,Ty)+e\,P(y,Tx),
	\end{align*}
	and bounding the second line by \eqref{eq:W} gives \eqref{eq:HR-d}.
	Applying the same argument to the ordered pair $(y,x)$ - both
	\eqref{eq:HR-D} and \eqref{eq:W} hold for all ordered pairs - and
	using $d(Ty,Tx)=d(Tx,Ty)$ together with $d(y,x)=d(x,y)$ gives
	\eqref{eq:HR-d-swap}. No symmetry of $D$ or of $P$ is used.
\end{proof}

\begin{remark}[Domination implies condition $(\mathrm{W})$]\label{rem:dom-implies-W}
	If $D$ is controlled by $d$ with constant $M\ge1$, then
	$P\le(M-1)d$ by Proposition~\ref{prop:structure}$(ii)$, and
	\eqref{eq:W} holds with
	\[
	a_0=(M-1)a,\quad b_0=(M-1)b,\quad c_0=(M-1)c,
	\]
	\[
	h_0=(M-1)h,\quad e_0=(M-1)e,
	\]
	so that $A=Ma$, $B=Mb$, $C=Mc$, $H=Mh$, $E=Me$ in
	Lemma~\ref{lem:conversion}. Thus the lemma contains the
	domination-based conversion as a special case. The converse fails. The condition $(\mathrm{W})$ is strictly weaker than domination (see
	Remark~\ref{rem:W-weaker} and Example~\ref{ex:strict} below).
\end{remark}

\section{Main results}

\subsection{A Hardy-Rogers type theorem}

\begin{theorem}[Hardy-Rogers type theorem]\label{thm:HR}
	Let $(X,D,P)$ be a complete perturbed metric space with exact metric
	$d=D-P$. Let $T:X\to X$ satisfy \eqref{eq:HR-D} with
	$a,b,c,h,e\ge 0$, together with condition $(\mathrm{W})$ with
	constants $a_0,b_0,c_0,h_0,e_0\ge0$, and suppose
	\begin{equation}\label{eq:HR-coeff}
		(a+a_0)+(b+b_0)+(c+c_0)+(h+h_0)+(e+e_0)<1 .
	\end{equation}
	Then $T$ has a unique fixed point $x^\ast\in X$, and for every
	$x_0\in X$ the Picard sequence $x_{n+1}=Tx_n$ converges to $x^\ast$
	in $(X,d)$, with
	\begin{equation}\label{eq:kappa}
		d(x_{n},x_{n+1})\le\kappa^{\,n}\,d(x_0,x_1),\qquad
		\kappa:=\frac{2A+B+C+H+E}{\,2-(B+C+H+E)\,}\in[0,1),
	\end{equation}
	where $A,B,C,H,E$ are as in Lemma~\ref{lem:conversion}.
\end{theorem}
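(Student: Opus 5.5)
The plan is to transfer everything to the exact metric space $(X,d)$ using Lemma~\ref{lem:conversion}, and then run the classical Hardy--Rogers argument there. By the lemma, both \eqref{eq:HR-d} and \eqref{eq:HR-d-swap} hold for all $x,y$, with $A+B+C+H+E<1$ by \eqref{eq:HR-coeff}. From this point on only the metric $d$ is used, so no continuity of $T$ and no property of $D$ beyond $d\le D$ is needed.

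\textbf{Step 1 (geometric decay).} Fix $x_0$ and put $x=x_{n-1}$, $y=x_n$ in \eqref{eq:HR-d}. Write $d_n=d(x_n,x_{n+1})$, and note that $d(x_{n-1},Tx_n)=d(x_{n-1},x_{n+1})\le d_{n-1}+d_n$ while $d(x_n,Tx_{n-1})=0$. This gives
\[
d_n\le A d_{n-1}+B d_{n-1}+C d_n+H(d_{n-1}+d_n).
\]
Doing the same with \eqref{eq:HR-d-swap} gives
\[
d_n\le A d_{n-1}+B d_n+C d_{n-1}+E(d_{n-1}+d_n).
\]
Adding the two inequalities yields
\[
\bigl(2-B-C-H-E\bigr)d_n\le\bigl(2A+B+C+H+E\bigr)d_{n-1},
\]
so $d_n\le\kappa d_{n-1}$ and hence $d_n\le\kappa^n d_0$. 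The symmetrization is the point of the step: it produces exactly the stated $\kappa$ without any symmetry assumption on $B,C$ or $H,E$. We then check $\kappa<1$. The denominator is positive because $B+C+H+E<1<2$. Also $\kappa<1$ is equivalent to $2A+2(B+C+H+E)<2$, which is \eqref{eq:HR-coeff}.

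\textbf{Step 2 (Cauchy and limit).} The standard estimate $d(x_n,x_m)\le \kappa^n d_0/(1-\kappa)$ shows that $(x_n)$ is Cauchy in $(X,d)$. By completeness (Definition~\ref{def:conv}) it has a limit $x^\ast$.

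\textbf{Step 3 (fixed point without continuity).} This is the main obstacle. I would apply \eqref{eq:HR-d} with $x=x_n$, $y=x^\ast$ to bound $d(x_{n+1},Tx^\ast)$ by
\[
A\,d(x_n,x^\ast)+B\,d_n+C\,d(x^\ast,Tx^\ast)+H\,d(x_n,Tx^\ast)+E\,d(x^\ast,x_{n+1}).
\]
Letting $n\to\infty$ and using the triangle inequality for $d$ gives
\[
d(x^\ast,Tx^\ast)\le (C+H)\,d(x^\ast,Tx^\ast).
\]
Since $C+H<1$, we get $Tx^\ast=x^\ast$. The subtlety is that the terms $d(x_{n+1},Tx^\ast)$ and $d(x_n,Tx^\ast)$ must be handled by the triangle inequality in the exact metric rather than through continuity of $T$. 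That is legitimate precisely because \eqref{eq:HR-d} already lives in $d$.

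\textbf{Step 4 (uniqueness).} If $x^\ast$ and $y^\ast$ are fixed points, then \eqref{eq:HR-d} gives
\[
d(x^\ast,y^\ast)\le (A+H+E)\,d(x^\ast,y^\ast),
\]
so $d(x^\ast,y^\ast)=0$ because $A+H+E<1$. This also shows that the limit in Step 2 is independent of $x_0$.
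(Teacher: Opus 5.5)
Your proposal is correct and follows the paper's proof essentially step for step. You transfer to $d$ via Lemma~\ref{lem:conversion}, add \eqref{eq:HR-d} and \eqref{eq:HR-d-swap} at $(x_{n-1},x_n)$ to obtain the ratio $\kappa$, pass to the limit in \eqref{eq:HR-d} at $(x_n,x^\ast)$ to get $d(x^\ast,Tx^\ast)\le (C+H)\,d(x^\ast,Tx^\ast)$, and deduce uniqueness from $A+H+E<1$.
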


\begin{proof}
	Write $\sigma:=A+B+C+H+E<1$.
	
	\emph{Step 1: the Picard sequence is Cauchy.}
	Fix $x_0\in X$, let $x_{n+1}=Tx_n$ and put
	$\delta_n:=d(x_n,x_{n+1})$. Apply \eqref{eq:HR-d} and
	\eqref{eq:HR-d-swap} with $x=x_{n-1}$, $y=x_n$ and add the two
	inequalities. Since $d(x_n,Tx_{n-1})=d(x_n,x_n)=0$ and, by the
	triangle inequality for $d$,
	$d(x_{n-1},Tx_n)=d(x_{n-1},x_{n+1})\le\delta_{n-1}+\delta_n$, we
	obtain
	\[
	2\delta_n\le 2A\,\delta_{n-1}+(B+C)\,(\delta_{n-1}+\delta_n)
	+(H+E)\,(\delta_{n-1}+\delta_n),
	\]
	that is,
	\[
	\bigl(2-(B+C+H+E)\bigr)\,\delta_n
	\le\bigl(2A+B+C+H+E\bigr)\,\delta_{n-1}.
	\]
	Hence $\delta_n\le\kappa\,\delta_{n-1}$ with $\kappa$ as in
	\eqref{eq:kappa}, and $\kappa<1$ since $\sigma<1$. 	Thus $\delta_n\le\kappa^{\,n}\delta_0$, and for $m>n$,
	\[
	d(x_n,x_m)\le\sum_{k=n}^{m-1}\delta_k
	\le\delta_0\,\frac{\kappa^{\,n}}{1-\kappa}\ \xrightarrow[n\to\infty]{}0 .
	\]
	So $(x_n)$ is Cauchy in the complete metric space $(X,d)$. Let
	$x_n\to x^\ast$.
	
	\emph{Step 2: $x^\ast$ is a fixed point.} Applying \eqref{eq:HR-d}
	with $x=x_n$, $y=x^\ast$,
	\begin{align*}
	d(x_{n+1},Tx^\ast)
	&\le A\,d(x_n,x^\ast)+B\,\delta_n+C\,d(x^\ast,Tx^\ast)\\
	&\qquad +H\,d(x_n,Tx^\ast)+E\,d(x^\ast,x_{n+1}).
	\end{align*}
	Using $d(x_n,Tx^\ast)\le d(x_n,x^\ast)+d(x^\ast,Tx^\ast)$, this gives
	\[
	d(x_{n+1},Tx^\ast)
	\le (A+H)\,d(x_n,x^\ast)+B\,\delta_n+E\,d(x^\ast,x_{n+1})
	+(C+H)\,d(x^\ast,Tx^\ast).
	\]
	Since $d(x^\ast,Tx^\ast)\le d(x^\ast,x_{n+1})+d(x_{n+1},Tx^\ast)$,
	letting $n\to\infty$ and using $x_n\to x^\ast$, $\delta_n\to0$ yields
	\[
	d(x^\ast,Tx^\ast)\le (C+H)\,d(x^\ast,Tx^\ast),
	\]
	and $C+H<1$ forces $d(x^\ast,Tx^\ast)=0$, i.e., $Tx^\ast=x^\ast$.
	Note that no continuity of $T$ was used.
	
	\emph{Step 3: uniqueness.} If $u,v$ are fixed points, then
	\eqref{eq:HR-d} gives
	\[
	d(u,v)=d(Tu,Tv)\le A\,d(u,v)+H\,d(u,v)+E\,d(u,v)
	=(A+H+E)\,d(u,v),
	\]
	and $A+H+E<1$ forces $d(u,v)=0$, i.e., $u=v$.
\end{proof}

\begin{corollary}[Dominated case]\label{cor:dominated}
	Let $(X,D,P)$ be a complete perturbed metric space whose observed
	distance $D$ is controlled by $d$ with constant $M\ge1$, and let
	$T:X\to X$ satisfy \eqref{eq:HR-D} with $a,b,c,h,e\ge 0$ and
	\begin{equation}\label{eq:HR-coeff-M}
		M\,(a+b+c+h+e)<1 .
	\end{equation}
	Then all the conclusions of Theorem~\ref{thm:HR} hold, with
	$A=Ma$, $B=Mb$, $C=Mc$, $H=Mh$, $E=Me$.
\end{corollary}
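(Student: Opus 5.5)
The plan is to reduce Corollary~\ref{cor:dominated} directly to Theorem~\ref{thm:HR} by producing admissible constants $a_0,b_0,c_0,h_0,e_0$ in condition $(\mathrm{W})$ from the domination hypothesis. Completeness of $(X,D,P)$ and the contractive inequality \eqref{eq:HR-D} are assumed verbatim in both statements, so the only hypotheses of Theorem~\ref{thm:HR} that need checking are $(\mathrm{W})$ and the coefficient condition \eqref{eq:HR-coeff}.

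First, I would invoke Proposition~\ref{prop:structure}: since $D$ is controlled by $d$ with constant $M$, we have $P(x,y)\le (M-1)\,d(x,y)$ for all ordered pairs $(x,y)$. Applying this to each of the five pairs $(x,y)$, $(x,Tx)$, $(y,Ty)$, $(x,Ty)$, $(y,Tx)$, multiplying by the nonnegative coefficients $a,b,c,h,e$ and summing gives exactly \eqref{eq:W} with
\[
a_0=(M-1)a,\quad b_0=(M-1)b,\quad c_0=(M-1)c,\quad h_0=(M-1)h,\quad e_0=(M-1)e.
\]
These constants are nonnegative because $M\ge1$. This is precisely the content of Remark~\ref{rem:dom-implies-W}, which I would simply cite.

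Second, with these choices we get $A=a+a_0=Ma$, and similarly $B=Mb$, $C=Mc$, $H=Mh$, $E=Me$. Hence the left side of \eqref{eq:HR-coeff} equals $M(a+b+c+h+e)$, which is $<1$ by \eqref{eq:HR-coeff-M}. Theorem~\ref{thm:HR} then applies and yields the following conclusions:
\begin{itemize}
\item existence and uniqueness of the fixed point;
\item convergence of every Picard sequence in $(X,d)$;
\item the estimate \eqref{eq:kappa}, with $\kappa=\frac{M(2a+b+c+h+e)}{2-M(b+c+h+e)}$.
\end{itemize}

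There is no real obstacle; the only point needing care is to confirm that the bound $P\le(M-1)d$ holds for all ordered pairs, including non-symmetric ones. Proposition~\ref{prop:structure}$(ii)$ guarantees this, so no symmetry of $P$ is needed.
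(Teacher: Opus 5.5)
Your proposal is correct and follows the paper's proof: you obtain condition $(\mathrm{W})$ from domination via Remark~\ref{rem:dom-implies-W} with $a_0=(M-1)a,\dots,e_0=(M-1)e$, note that the left side of \eqref{eq:HR-coeff} then equals $M(a+b+c+h+e)<1$, and apply Theorem~\ref{thm:HR}. Your explicit value $\kappa=\frac{M(2a+b+c+h+e)}{2-M(b+c+h+e)}$ is also correct.
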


\begin{proof}
	By Remark~\ref{rem:dom-implies-W}, condition $(\mathrm{W})$ holds
	with $a_0=(M-1)a,\dots,e_0=(M-1)e$, and then the left-hand side of
	\eqref{eq:HR-coeff} equals $M(a+b+c+h+e)<1$. Apply
	Theorem~\ref{thm:HR}.
\end{proof}

\begin{remark}[Condition $(\mathrm{W})$ is strictly weaker than
domination]\label{rem:W-weaker}
	The domination condition \eqref{eq:controlled} is thus the special
	case of condition $(\mathrm{W})$ in which the same uniform relative
	bound $P\le(M-1)d$ is imposed on every pair of points and
	distributed proportionally over the five slots of \eqref{eq:W}. In
	general, condition $(\mathrm{W})$ is strictly weaker, for two
	reasons. First, it constrains $P$ only on the pairs weighted by
	nonzero coefficients: for instance, in the Kannan case
	$a=h=e=0$ the inequality \eqref{eq:W} involves $P$ only on the graph
	pairs $(x,Tx)$, so the rbital bound
	$P(x,Tx)\le K\,d(x,Tx)$ for all $x\in X$ suffices (with
	$b_0=Kb$, $c_0=Kc$), and no condition whatsoever is imposed on
	$P(x,y)$ for general pairs. Second, it allows different relative
	bounds in different slots, whereas domination forces the same
	constant $M$ throughout. Example~\ref{ex:strict} exhibits a mapping
	to which Theorem~\ref{thm:HR} applies although $D$ is not controlled
	by $d$ for any constant $M$, so Corollary~\ref{cor:dominated} (and
	with it the entire domination framework) is not applicable.
\end{remark}

\begin{example}[Strictness]\label{ex:strict}
	Let $X=[0,1]$ with $d(x,y)=|x-y|$, so that $(X,d)$ is complete.
	Define
	\[
	P(x,y):=
	\begin{cases}
		1, & \text{if } x,y\in(\tfrac14,1] \text{ and } x\ne y,\\
		0, & \text{otherwise},
	\end{cases}
	\qquad D:=d+P ,
	\]
	and $T(x):=x/4$. Since $P(x,y)/d(x,y)=1/|x-y|$ is unbounded for
	$x,y\in(\tfrac14,1]$ close to each other, $D$ is not
	controlled by $d$ for any $M\ge1$, and Corollary~\ref{cor:dominated}
	does not apply. On the other hand, $Tx=x/4\le\tfrac14$ for every
	$x\in X$, so $P$ vanishes on all pairs $(Tx,Ty)$ and on all graph
	pairs $(x,Tx)$. Consequently
	\begin{align*}
	D(Tx,Ty)&=\tfrac14|x-y|\le\tfrac14(x+y)
	=\tfrac13\bigl(\tfrac34 x+\tfrac34 y\bigr)\\
	&=\tfrac13\bigl(d(x,Tx)+d(y,Ty)\bigr)
	\le\tfrac13\bigl(D(x,Tx)+D(y,Ty)\bigr),
	\end{align*}
	i.e., $T$ satisfies the Kannan type condition \eqref{eq:HR-D} with
	$b=c=\tfrac13$, $a=h=e=0$. Condition $(\mathrm{W})$ holds trivially
	with $a_0=b_0=c_0=h_0=e_0=0$, because its left-hand side
	$\tfrac13 P(x,Tx)+\tfrac13 P(y,Ty)$ vanishes identically. The
	coefficient condition \eqref{eq:HR-coeff} reads
	$\tfrac13+\tfrac13=\tfrac23<1$, so Theorem~\ref{thm:HR} applies and
	yields the unique fixed point $x^\ast=0$ with Picard convergence
	from every starting point.
\end{example}

\begin{remark}
	In the dominated case, the coefficient condition
	\eqref{eq:HR-coeff-M} matches the classical
	Hardy-Rogers condition $a+b+c+h+e<1$ of \cite{HardyRogers1973}, with
	the domination constant $M$ entering linearly. The symmetrization
	device in Step~1 of the proof of Theorem~\ref{thm:HR}, which avoids
	any doubling of the coefficient $h$,
	is classical in the metric setting (see, e.g., the discussion in
	\cite{HardyRogers1973}). The point here is that it survives the
	passage through a non-symmetric observed distance, because
	Lemma~\ref{lem:conversion} produces the two ordered $d$-inequalities
	\eqref{eq:HR-d} and \eqref{eq:HR-d-swap} regardless of any symmetry
	of $D$ or of $P$.
\end{remark}

\begin{corollary}[Reich type]\label{cor:Reich}
	Under the hypotheses of Corollary~\ref{cor:dominated} with $h=e=0$: if
	\[
	D(Tx,Ty)\le a\,D(x,y)+b\,D(x,Tx)+c\,D(y,Ty),\qquad x,y\in X,
	\]
	and $M(a+b+c)<1$, then $T$ has a unique fixed point and the Picard
	iteration converges to it in $(X,d)$.
\end{corollary}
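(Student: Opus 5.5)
This is a direct specialization of Corollary~\ref{cor:dominated}, so the plan is to apply that corollary with $h=e=0$. We work under the standing hypotheses: $(X,D,P)$ is a complete perturbed metric space and $D$ is controlled by $d$ with constant $M\ge1$. The assumed inequality
\[
D(Tx,Ty)\le a\,D(x,y)+b\,D(x,Tx)+c\,D(y,Ty)
\]
is exactly \eqref{eq:HR-D} with $h=e=0$. Since $a,b,c\ge0$ and $h=e=0$, every coefficient in \eqref{eq:HR-D} is nonnegative, as Corollary~\ref{cor:dominated} requires.

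Next we check the coefficient condition. With $h=e=0$, \eqref{eq:HR-coeff-M} becomes $M(a+b+c+0+0)=M(a+b+c)<1$, and this is precisely the stated hypothesis. Corollary~\ref{cor:dominated} therefore applies. Tracing back through it, Remark~\ref{rem:dom-implies-W} supplies condition $(\mathrm{W})$ with $a_0=(M-1)a$, $b_0=(M-1)b$, $c_0=(M-1)c$ and $h_0=e_0=0$. Theorem~\ref{thm:HR} then gives a unique fixed point $x^\ast$, and for every $x_0$ the Picard sequence converges to $x^\ast$ in $(X,d)$.

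If we want the explicit rate, we can also record it: with $A=Ma$, $B=Mb$, $C=Mc$ and $H=E=0$, \eqref{eq:kappa} gives
\[
\kappa=\frac{M(2a+b+c)}{2-M(b+c)}<1.
\]
There is no real obstacle here. The only point to verify is that the vanishing coefficients $h=e=0$ are permitted, and they are, since the theorem only requires nonnegative coefficients.
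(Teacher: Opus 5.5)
Your proposal is correct and is the same argument the paper relies on: the paper gives no separate proof, and the corollary is simply Corollary~\ref{cor:dominated} with $h=e=0$, where \eqref{eq:HR-coeff-M} reduces to $M(a+b+c)<1$. Your additional rate $\kappa=M(2a+b+c)/\bigl(2-M(b+c)\bigr)$ is also correct, and $\kappa<1$ is equivalent to $M(a+b+c)<1$.
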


\begin{corollary}[Kannan type]\label{cor:Kannan}
	Under the hypotheses of Corollary~\ref{cor:dominated} with $a=h=e=0$: if
	\[
	D(Tx,Ty)\le b\,\left(D(x,Tx)+D(y,Ty)\right),\qquad x,y\in X,
	\]
	and $2Mb<1$, then $T$ has a unique fixed point and the Picard
	iteration converges to it in $(X,d)$.
\end{corollary}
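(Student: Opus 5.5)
This is a direct specialization of Corollary~\ref{cor:dominated}, so I would not rerun any fixed point argument. Set $a=h=e=0$ and $c=b$ in \eqref{eq:HR-D}; the Kannan hypothesis then reads exactly
\[
D(Tx,Ty)\le 0\cdot D(x,y)+b\,D(x,Tx)+b\,D(y,Ty)+0\cdot D(x,Ty)+0\cdot D(y,Tx).
\]
Under this choice the coefficient condition \eqref{eq:HR-coeff-M} becomes $M(0+b+b+0+0)=2Mb<1$, which is precisely the stated assumption.

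Next I would check the standing hypotheses of Corollary~\ref{cor:dominated}. The space $(X,D,P)$ is complete, $D$ is controlled by $d$ with constant $M\ge1$, and all coefficients are nonnegative. These are exactly what the phrase ``under the hypotheses of Corollary~\ref{cor:dominated}'' supplies. Remark~\ref{rem:dom-implies-W} then gives condition $(\mathrm{W})$ with $b_0=c_0=(M-1)b$ and $a_0=h_0=e_0=0$, so that $A=H=E=0$ and $B=C=Mb$.

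Applying Corollary~\ref{cor:dominated}, and through it Theorem~\ref{thm:HR}, gives the unique fixed point and the $d$-convergence of the Picard iteration from any $x_0$. As a byproduct it also gives the rate
\[
\kappa=\frac{2Mb}{2-2Mb}=\frac{Mb}{1-Mb}<1,
\]
which is indeed below $1$ exactly when $2Mb<1$. Uniqueness follows from Step~3 of Theorem~\ref{thm:HR}, since there $A+H+E=0<1$.

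There is no real obstacle. The only point to verify is that setting $c=b$ and zeroing $a,h,e$ is a legitimate instance of the general theorem, and it is, because that theorem allows arbitrary nonnegative coefficients. Condition $(\mathrm{W})$ is automatic here because the corollary assumes domination.
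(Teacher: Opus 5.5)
Your proposal is correct and follows the paper's approach: the paper gives no separate proof and treats this corollary as the specialization $a=h=e=0$, $c=b$ of Corollary~\ref{cor:dominated}. Your bookkeeping is also accurate, namely $A=H=E=0$, $B=C=Mb$, and $\kappa=Mb/(1-Mb)<1$ exactly when $2Mb<1$.
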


\begin{corollary}[Chatterjea type]\label{cor:Chatterjea}
	Under the hypotheses of Corollary~\ref{cor:dominated} with $a=b=c=0$: if
	\[
	D(Tx,Ty)\le h\,\left(D(x,Ty)+D(y,Tx)\right),\qquad x,y\in X,
	\]
	and $2Mh<1$, then $T$ has a unique fixed point and the Picard
	iteration converges to it in $(X,d)$.
\end{corollary}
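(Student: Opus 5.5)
This is a direct specialization of Corollary~\ref{cor:dominated}, so the plan is to check its hypotheses and apply it. We take $a=b=c=0$ in \eqref{eq:HR-D}, so the assumed inequality is exactly \eqref{eq:HR-D} with coefficients $(0,0,0,h,e)$ and $e=h$. The coefficient sum in \eqref{eq:HR-coeff-M} is then $M(0+0+0+h+h)=2Mh$, and this is $<1$ by assumption. Since $D$ is controlled by $d$ with constant $M$, Corollary~\ref{cor:dominated} (equivalently, Theorem~\ref{thm:HR} via Remark~\ref{rem:dom-implies-W}, with $a_0=b_0=c_0=0$ and $h_0=e_0=(M-1)h$) gives a unique fixed point together with Picard convergence in $(X,d)$.

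For concreteness I would also record what the conclusions look like in this case. Here $A=B=C=0$ and $H=E=Mh$. The contraction constant from \eqref{eq:kappa} is
\[
\kappa=\frac{2Mh}{2-2Mh}=\frac{Mh}{1-Mh},
\]
and $\kappa<1$ holds precisely because $2Mh<1$. This agrees with the classical Chatterjea rate. In Step~2 of the proof of Theorem~\ref{thm:HR} the factor $C+H=Mh$ is $<1$, and in Step~3 the factor $A+H+E=2Mh$ is $<1$, so every strict inequality used in the theorem is available.

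There is essentially no obstacle. The one point to watch is that the Chatterjea condition uses the cross pairs $D(x,Ty)$ and $D(y,Tx)$, so symmetry of $D$ might seem to be needed. It is not, because Lemma~\ref{lem:conversion} supplies both ordered inequalities \eqref{eq:HR-d} and \eqref{eq:HR-d-swap} without any symmetry assumption. That lemma is already built into Theorem~\ref{thm:HR}, so nothing further has to be checked.
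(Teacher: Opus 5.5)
Your proposal is correct and follows the paper's route exactly: the corollary is obtained by specializing Corollary~\ref{cor:dominated} to $a=b=c=0$, $e=h$, so that \eqref{eq:HR-coeff-M} becomes $2Mh<1$. Your extra bookkeeping is accurate as well, namely $A=B=C=0$, $H=E=Mh$ and $\kappa=Mh/(1-Mh)$. So is your remark that Lemma~\ref{lem:conversion} removes any need for symmetry of $D$ in the cross terms.
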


\begin{corollary}[Banach type]\label{cor:Banach}
	Under the hypotheses of Corollary~\ref{cor:dominated} with $b=c=h=e=0$: if
	\[
	D(Tx,Ty)\le \lambda\, D(x,y),\qquad x,y\in X,
	\]
	for some $\lambda\in[0,1)$ with $M\lambda<1$, then $T$ has a unique
	fixed point and the Picard iteration converges to it in $(X,d)$.
\end{corollary}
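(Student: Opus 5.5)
The Banach type statement is the special case $a=\lambda$, $b=c=h=e=0$ of Corollary~\ref{cor:dominated}, so the plan is simply to specialise that corollary and check its hypotheses. The only structural data are a complete perturbed metric space $(X,D,P)$ whose observed distance is controlled by $d$ with constant $M\ge1$, and a map $T$ with $D(Tx,Ty)\le\lambda D(x,y)$ for all $x,y\in X$.

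The steps, in order, are as follows.

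\emph{Step 1.} Observe that this contractive inequality is exactly \eqref{eq:HR-D} with coefficients $a=\lambda$ and $b=c=h=e=0$. All coefficients are nonnegative because $\lambda\in[0,1)$.

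\emph{Step 2.} Check the coefficient condition \eqref{eq:HR-coeff-M}. With these coefficients, $M(a+b+c+h+e)=M\lambda$, which is $<1$ by assumption. Then Corollary~\ref{cor:dominated} yields a unique fixed point $x^\ast$, and Picard convergence in $(X,d)$ from every $x_0$.

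The corollary also gives the rate \eqref{eq:kappa} with $A=M\lambda$ and $B=C=H=E=0$. There $\kappa=2M\lambda/2=M\lambda$, so $d(x_n,x_{n+1})\le (M\lambda)^n d(x_0,x_1)$, which may be recorded as a byproduct.

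\emph{Alternative route.} One could instead go through Theorem~\ref{thm:HR} via Remark~\ref{rem:dom-implies-W}, taking $a_0=(M-1)\lambda$ and the other constants zero; this gives the same conclusion. Nothing else needs verifying: completeness is part of the hypotheses of Corollary~\ref{cor:dominated}, and no continuity of $T$ is required.

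There is no genuine obstacle. The only point deserving a remark is that the stated hypothesis $\lambda<1$ is redundant: since $M\ge1$, the condition $M\lambda<1$ already implies $\lambda<1$. Conversely, $\lambda<1$ alone would not suffice to invoke the corollary, which is why $M\lambda<1$ is the operative assumption.
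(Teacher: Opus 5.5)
Your proposal is correct and matches the paper, which obtains Corollary~\ref{cor:Banach} directly as the specialisation $a=\lambda$, $b=c=h=e=0$ of Corollary~\ref{cor:dominated}, so that \eqref{eq:HR-coeff-M} reduces to $M\lambda<1$. Your side remarks are also accurate: $\kappa=M\lambda$ in this case, and $\lambda<1$ is redundant because $M\ge1$.
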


\begin{remark}
	Corollary~\ref{cor:Banach} complements the Banach type theorem of
	Jleli and Samet \cite{JleliSamet2025}: there, no domination is
	assumed but $T$ is required to be continuous with respect to $d$. Here, no continuity is assumed but the observed distance is
	controlled. Example~\ref{ex:counter} below shows that dropping
	both hypotheses destroys the conclusion, so the two results
	chart the two available routes.
\end{remark}

\begin{proposition}[A priori error bound from observed data]\label{prop:error}
	Under the assumptions of Theorem~\ref{thm:HR}, let $x^\ast$ be the
	unique fixed point of $T$ and let $x_{n+1}=Tx_n$. Then for all
	$n\ge 0$,
	\begin{equation}\label{eq:error-estimate}
		d(x_n,x^\ast)\le \frac{\kappa^{\,n}}{1-\kappa}\,D(x_0,x_1),
	\end{equation}
	with $\kappa$ as in \eqref{eq:kappa}.
\end{proposition}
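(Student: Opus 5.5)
The plan is to reuse the geometric decay from Step~1 of the proof of Theorem~\ref{thm:HR} and then pass to the limit. All hypotheses of that theorem are in force, so the fixed point $x^\ast$ exists, $x_m\to x^\ast$ in $(X,d)$, and the estimate \eqref{eq:kappa}, $\delta_k:=d(x_k,x_{k+1})\le\kappa^{\,k}\,d(x_0,x_1)$, is available for every $k\ge0$.

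\emph{First}, for $m>n$ the triangle inequality in $(X,d)$ and a geometric sum give
\[
d(x_n,x_m)\le\sum_{k=n}^{m-1}\delta_k\le d(x_0,x_1)\sum_{k=n}^{\infty}\kappa^{\,k}=\frac{\kappa^{\,n}}{1-\kappa}\,d(x_0,x_1).
\]
\emph{Second}, we let $m\to\infty$ with $n$ fixed. Since $d(x_m,x^\ast)\to0$, we have
\[
d(x_n,x^\ast)\le d(x_n,x_m)+d(x_m,x^\ast).
\]
Taking the limit in the bound from the first step yields $d(x_n,x^\ast)\le \frac{\kappa^{n}}{1-\kappa}\,d(x_0,x_1)$. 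Equivalently, one can use the continuity of the metric $d$ in its second argument.

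\emph{Third}, we convert to observed data. Because $P\ge0$, we have $d(x_0,x_1)=D(x_0,x_1)-P(x_0,x_1)\le D(x_0,x_1)$, and \eqref{eq:error-estimate} follows. The case $n=0$ is covered by the same computation.

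There is essentially no obstacle here. The only point requiring care is that convergence, and hence the limit passage, takes place in the exact metric $d$ and never in $D$. This matters because $D$ may fail the triangle inequality and symmetry. The observed quantity enters only at the very last step, through the pointwise inequality $d\le D$, so no condition $(\mathrm{W})$ or domination is needed beyond what Theorem~\ref{thm:HR} already uses.
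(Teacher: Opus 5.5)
Your proposal is correct and matches the paper's proof: you take the geometric decay $d(x_k,x_{k+1})\le\kappa^{k}d(x_0,x_1)$ from Step~1 of Theorem~\ref{thm:HR}, telescope in $(X,d)$, let $m\to\infty$, and finish with $d\le D$. The only differences are cosmetic: you apply $d(x_0,x_1)\le D(x_0,x_1)$ after the limit rather than before summing, and you write out the limit step via the triangle inequality, which the paper leaves implicit.
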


\begin{proof}
	From the proof of Theorem~\ref{thm:HR}, using $d\le D$,
	\[
	d(x_k,x_{k+1})\le\kappa^{\,k}d(x_0,x_1)\le\kappa^{\,k}D(x_0,x_1).
	\]
	Thus, for $m>n$,
	\[
	d(x_n,x_m)\le\sum_{k=n}^{m-1}d(x_k,x_{k+1})
	\le D(x_0,x_1)\sum_{k=n}^{m-1}\kappa^{\,k},
	\]
	and letting $m\to\infty$ yields \eqref{eq:error-estimate}.
\end{proof}

\begin{remark}
	The bound \eqref{eq:error-estimate} is stated in terms of the observed initial displacement $D(x_0,x_1)$, which is the
	quantity available in applications. The sharper bound with $d(x_0,x_1)$ holds as well but is in general not observable.
\end{remark}

\begin{example}\label{ex:HR-metric}
	If $P\equiv 0$, then $D=d$, condition $(\mathrm{W})$ holds trivially
	with $a_0=b_0=c_0=h_0=e_0=0$ (and condition \eqref{eq:controlled}
	holds with $M=1$), and Theorem~\ref{thm:HR} reduces to the classical
	Hardy-Rogers theorem in a metric space \cite{HardyRogers1973}.
\end{example}

\begin{example}[A non-symmetric observed distance]\label{ex:HR-nonsym}
	Let $X=\mathbb{R}$, let $d(x,y)=|x-y|$, and fix $\beta\in(0,1)$.
	Define
	\[
	P(x,y)=
	\begin{cases}
		\beta\,|x-y|, & x\le y,\\
		0, & x>y,
	\end{cases}
	\qquad
	D(x,y)=d(x,y)+P(x,y).
	\]
	Then $d=D-P$, while $D$ is not symmetric whenever $x<y$, and
	$D(x,y)\le(1+\beta)\,d(x,y)$, so $D$ is controlled by $d$ with
	$M=1+\beta$. Let $\lambda\in[0,1)$ satisfy $(1+\beta)\lambda<1$ and
	define $T(x)=\lambda x$. Since $\lambda\ge0$ preserves the order of
	points, one checks directly that $D(Tx,Ty)=\lambda\,D(x,y)$ for all
	$x,y$. Hence Corollary~\ref{cor:Banach} applies, and $T$ has the
	unique fixed point $0$.
\end{example}

\subsection{Necessity of the hypotheses - a counterexample}

We now show that if condition $(\mathrm{W})$ (in particular, the domination
condition) and the $d$-continuity of
$T$ are both removed, then even the strongest contractive condition - a Banach
type condition in $D$ with coefficient $\tfrac12$ - does not guarantee a
fixed point.

\begin{example}\label{ex:counter}
	Let $X=[0,1]$ with the usual metric $d(x,y)=|x-y|$, so that $(X,d)$ is
	complete. Define
	\[
	P(x,y):=
	\begin{cases}
		2, & \text{if exactly one of } x,y \text{ equals } 0,\\
		0, & \text{otherwise},
	\end{cases}
	\qquad D:=d+P .
	\]
	Then $P\ge0$ is symmetric, vanishes on the diagonal, and $d=D-P$ is a
	metric, so $(X,D,P)$ is a complete perturbed metric space. Define
	\[
	T(0):=1,\qquad T(x):=\frac{x}{2}\quad(0<x\le1).
	\]
	Then:
	\begin{itemize}
		\item[$(a)$] $T$ has no fixed point: $x/2=x$ forces $x=0$, but
		$T(0)=1\ne0$.
		\item[$(b)$] $T$ satisfies the perturbed Banach condition
		\[
		D(Tx,Ty)\le\tfrac12\,D(x,y),\qquad x,y\in X .
		\]
		Indeed, for $x,y\in(0,1]$ we have $Tx,Ty\in(0,\tfrac12]$, so
		$P(Tx,Ty)=0$ and
		$D(Tx,Ty)=\tfrac12|x-y|\le\tfrac12 D(x,y)$. For $x=0$ and
		$y\in(0,1]$ we have $T0=1$, $Ty=y/2\in(0,\tfrac12]$, so
		$P(T0,Ty)=0$ and
		\[
		D(T0,Ty)=\left|1-\tfrac{y}{2}\right|<1
		\le\tfrac12\,(y+2)=\tfrac12\,D(0,y).
		\]
		The case $y=0$, $x\in(0,1]$ is symmetric, and the diagonal case
		is trivial since $D(Tx,Tx)=P(Tx,Tx)=0$.
	\end{itemize}
	Note where the classical mechanisms fail. The Picard sequence started
	at $x_0=1$ is $x_n=2^{-n}$, which $d$-converges to $0$, but the limit
	is not a fixed point, because $T$ is not $d$-continuous at $0$ and no
	estimate transfers the contraction across the limit. The perturbation
	$P(0,y)=2$ does not vanish as $y\to0$, so
	$\sup_{y\ne0}P(0,y)/d(0,y)=\sup_{y}2/y=\infty$ and $D$ is not
	controlled by $d$ for any $M$.

	Condition $(\mathrm{W})$ fails as well, for every admissible choice of
	constants. Here $a=\tfrac12$, $b=c=h=e=0$, so \eqref{eq:W} reads
	$\tfrac12 P(x,y)\le a_0\,d(x,y)+b_0\,d(x,Tx)+c_0\,d(y,Ty)
	+h_0\,d(x,Ty)+e_0\,d(y,Tx)$, and \eqref{eq:HR-coeff} forces
	$a_0+b_0+c_0+h_0+e_0<\tfrac12$. Taking $x=0$ and $y\in(0,1]$, the
	left-hand side equals $1$, while, using $T0=1$ and $Ty=y/2$, the
	right-hand side equals
	$a_0y+b_0+c_0\tfrac{y}{2}+h_0\tfrac{y}{2}+e_0(1-y)
	\le a_0+b_0+c_0+h_0+e_0<\tfrac12$, a contradiction. This is of
	course consistent with Theorem~\ref{thm:HR}, since $T$ has no fixed
	point.
\end{example}

\begin{remark}\label{rem:NP} Example~\ref{ex:counter} is compatible with the Banach type theorem of	Jleli and Samet \cite{JleliSamet2025}, since the map $T$ above is not $d$-continuous.
\end{remark}

We now locate precisely the gap Example~\ref{ex:counter} exposes in Theorem~2.2 of \cite{NutuPacurar2025}, which asserts a fixed point for every $\varphi$-perturbed mapping $D(Tx,Ty)\le\varphi(D(x,y))$ for a comparison function $\varphi$) on a complete perturbed metric space, with no further hypothesis. Taking $\varphi(t)=t/2$, Example~\ref{ex:counter} is precisely of this form. The proof of \cite[Thm.~2.2]{NutuPacurar2025} shows that the Picard sequence $(x_n)$ is Cauchy in $(X,d)$, hence	$d$-convergent to some $x^\ast$, and then estimates $D(Tx_n,Tx^\ast)\le\varphi\bigl(D(x_n,x^\ast)\bigr)$ and let $n\to\infty$ to conclude $D(x^\ast,Tx^\ast)\le\varphi(0)=0$. This last step requires $D(x_n,x^\ast)\to0$, but only $d(x_n,x^\ast)\to0$
has been established, and $D(x_n,x^\ast)=d(x_n,x^\ast)+P(x_n,x^\ast)$	need not tend to $0$ if the perturbation term $P(x_n,x^\ast)$ does not	vanish along the orbit. Example~\ref{ex:counter} realizes exactly this failure: $d(x_n,0)=2^{-n}\to0$ while $P(x_n,0)=2$ for every $n\ge1$, so	$D(x_n,0)\to2\ne0$ and $\varphi(D(x_n,0))\to\varphi(2)=1\ne0$, and	indeed $T$ has no fixed point. \\

The missing hypothesis is therefore not, as one might first guess, global continuity of $T$ with respect to $D$ or with respect to $d$. The mapping of \cite[Ex.~2.4]{NutuPacurar2025}, which the authors use to illustrate Theorem~2.2 on a genuinely discontinuous $T$, is	discontinuous only away from its fixed point and is continuous at the fixed point (constant on a whole neighbourhood of it), which is exactly what the limiting step above needs. The sharp	missing hypothesis is local. Either $d$-continuity of $T$ at the limit point $x^\ast$ together with $P(x_n,x^\ast)\to0$, or, as in the present paper, the domination condition	\eqref{eq:controlled}, which forces $D(x_n,x^\ast)\le M\,d(x_n,x^\ast)\to 0$	automatically and so repairs the limiting step without any continuity assumption on $T$. More generally, condition $(\mathrm{W})$ of the present paper sidesteps the $D$-limit entirely. Via	Lemma~\ref{lem:conversion} the whole argument is transferred to the exact metric $d$, and $D(x_n,x^\ast)\to0$ is never needed. This gap is specific to the $\varphi$-perturbed Theorem~2.2 and does not affect the perturbed Kannan theorem (\cite[Thm.~3.3]{NutuPacurar2025}). There the analogous estimate is $D(Tx_n,Tx^\ast)\le\lambda\bigl[D(x_{n-1},x_n)+D(x^\ast,Tx^\ast)\bigr]$, in which the term $D(x^\ast,Tx^\ast)$ is a constant (not evaluated
along the orbit) and $D(x_{n-1},x_n)\to0$ is obtained directly from the Kannan contractive inequality in $D$, without ever requiring $D(x_n,x^\ast)\to0$. Example~\ref{ex:counter} is not a $\varphi$-perturbed Kannan mapping and does not bear on that theorem.

\subsection{Ulam-Hyers stability and data dependence}

Throughout this subsection, $(X,D,P)$ is a complete perturbed metric space
and $T:X\to X$ satisfies the Hardy-Rogers condition \eqref{eq:HR-D} together
with condition $(\mathrm{W})$ and the coefficient condition
\eqref{eq:HR-coeff}. By Theorem~\ref{thm:HR}, $T$ has a unique fixed point
$x^\ast$. As before, $A=a+a_0$, $B=b+b_0$, $C=c+c_0$, $H=h+h_0$, $E=e+e_0$,
and $\sigma=A+B+C+H+E<1$. In the dominated case of
Corollary~\ref{cor:dominated} these reduce to $A=Ma$, $B=Mb$, $C=Mc$,
$H=Mh$, $E=Me$. \\

A point $u\in X$ is an observed {$\varepsilon$-solution} of the fixed
point equation $x=Tx$ if
\begin{equation}\label{eq:eps-sol}
	D(u,Tu)\le\varepsilon .
\end{equation}
This is the natural notion in the perturbed setting. The residual is measured
in the distance actually available to the observer. Since $d\le D$, every
observed $\varepsilon$-solution is an $\varepsilon$-solution in the exact
metric as well (i.e. $d(u,Tu) \leq \varepsilon$).

\begin{theorem}[Ulam-Hyers stability]\label{thm:UH}
	For every $\varepsilon>0$ and every $u\in X$ with
	$D(u,Tu)\le\varepsilon$,
	\begin{equation}\label{eq:UH}
		d(u,x^\ast)\ \le\ \frac{1+B+E}{\,1-(A+H+E)\,}\;\varepsilon.
	\end{equation}
	In particular, the fixed point equation of $T$ is Ulam-Hyers stable,
	with a constant explicit in the data
	$(a,b,c,h,e,a_0,b_0,c_0,h_0,e_0)$.
\end{theorem}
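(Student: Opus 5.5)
The plan is to transfer everything to the exact metric via Lemma~\ref{lem:conversion}, and then run the classical Ulam-Hyers argument for Hardy-Rogers maps with the fixed point $x^\ast$ as the second argument. The observed residual is used only once, through $d\le D$: from $D(u,Tu)\le\varepsilon$ we get $d(u,Tu)\le\varepsilon$. After that, no property of $D$ or $P$ is needed beyond what is already built into \eqref{eq:HR-d}.

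First, I will use the triangle inequality for $d$ together with $Tx^\ast=x^\ast$ (Theorem~\ref{thm:HR}) to write
\[
d(u,x^\ast)\le d(u,Tu)+d(Tu,Tx^\ast)\le \varepsilon+d(Tu,Tx^\ast).
\]
Next, I apply \eqref{eq:HR-d} with $x=u$ and $y=x^\ast$. This ordering is deliberate: it puts $B$ on $d(u,Tu)$ and $C$ on $d(x^\ast,Tx^\ast)=0$, so that $C$ disappears from the final constant, which matches the claimed $1+B+E$ in the numerator. The estimate reads
\[
d(Tu,Tx^\ast)\le A\,d(u,x^\ast)+B\,d(u,Tu)+H\,d(u,x^\ast)+E\,d(x^\ast,Tu),
\]
where $d(u,Tx^\ast)=d(u,x^\ast)$ has been used in the $H$-term. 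The only mixed term is $d(x^\ast,Tu)$. I bound it by $d(x^\ast,u)+d(u,Tu)\le d(u,x^\ast)+\varepsilon$, which gives
\[
d(Tu,Tx^\ast)\le (A+H+E)\,d(u,x^\ast)+(B+E)\,\varepsilon .
\]

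Combining the two displays yields $d(u,x^\ast)\le (1+B+E)\varepsilon+(A+H+E)\,d(u,x^\ast)$. Since $A+H+E\le\sigma<1$ and $d(u,x^\ast)<\infty$, the term $(A+H+E)\,d(u,x^\ast)$ can be absorbed on the left, giving exactly \eqref{eq:UH}. The constant depends only on the listed coefficients, which is the Ulam-Hyers stability claim.

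There is no genuine obstacle here. The one point needing care is choosing the ordered pair $(u,x^\ast)$ rather than $(x^\ast,u)$ in Lemma~\ref{lem:conversion}. The swapped inequality \eqref{eq:HR-d-swap} would produce $C$ instead of $B$ in the numerator, which is still a valid bound but not the one stated.
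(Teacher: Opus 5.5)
Your proof is correct and follows the paper's argument exactly: you split $d(u,x^\ast)$ through $Tu$, apply \eqref{eq:HR-d} to the ordered pair $(u,x^\ast)$, bound the cross term by $d(x^\ast,Tu)\le d(u,x^\ast)+d(u,Tu)$, and absorb using $A+H+E<1$. One small correction to your closing aside: using \eqref{eq:HR-d-swap} instead would change both $B$ to $C$ and $E$ to $H$, giving the numerator $1+C+H$ rather than only swapping $B$ for $C$.
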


\begin{proof}
	Put $r:=d(u,x^\ast)$ and $\delta:=d(u,Tu)\le D(u,Tu)\le\varepsilon$.
	By the triangle inequality for $d$,
	\begin{equation}\label{eq:UH1}
		r\le \delta+d(Tu,x^\ast)=\delta+d(Tu,Tx^\ast).
	\end{equation}
	Apply \eqref{eq:HR-d} with $x=u$, $y=x^\ast$, using
	$d(x^\ast,Tx^\ast)=0$, $d(u,Tx^\ast)=r$ and
	$d(x^\ast,Tu)\le d(x^\ast,u)+d(u,Tu)=r+\delta$:
	\[
	d(Tu,Tx^\ast)\le A\,r+B\,\delta+H\,r+E\,(r+\delta)
	=(A+H+E)\,r+(B+E)\,\delta .
	\]
	Substituting into \eqref{eq:UH1},
	\[
	r\le (A+H+E)\,r+(1+B+E)\,\delta,
	\]
	and since $A+H+E\le\sigma<1$, rearranging gives
	$r\le\frac{1+B+E}{1-(A+H+E)}\,\delta
	\le\frac{1+B+E}{1-(A+H+E)}\,\varepsilon$.
\end{proof}

\begin{remark}[Non-sharpness]\label{rem:UH-notsharp}
	The constant in \eqref{eq:UH} is a valid upper bound but is not
	claimed to be optimal. For instance, in the dominated case of
	Corollary~\ref{cor:dominated} with a pure Banach-type contraction
	($b=c=h=e=0$, so $B=C=H=E=0$, $A=Ma$), a direct computation on the
	orbit $u\mapsto D(u,Tu)/D(u,x^\ast)$ ratio gives the sharper worst-case
	value $1/(M-A)$, strictly smaller than the bound $1/(1-A)$ produced by
	\eqref{eq:UH} whenever $M>1$. The gap is the factor $(M-A)/(1-A)$.
	Determining the optimal Ulam-Hyers modulus in perturbed metric spaces,
	analogous to the sharp modulus for contractions in $b$-metric
	setting determined by the first and third author in \cite{BajPet2026}, is left open.
\end{remark}

\begin{corollary}[Well-posedness]\label{cor:wp}
	If $(u_k)\subset X$ satisfies $D(u_k,Tu_k)\to0$, then $u_k\to x^\ast$
	in $(X,d)$.
\end{corollary}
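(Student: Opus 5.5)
The natural route is to apply Theorem~\ref{thm:UH} term by term along the sequence. Set $\varepsilon_k:=D(u_k,Tu_k)$; by hypothesis $\varepsilon_k\to0$. For any index $k$ with $\varepsilon_k>0$, the point $u_k$ is an observed $\varepsilon_k$-solution, and \eqref{eq:UH} gives
\[
d(u_k,x^\ast)\le \frac{1+B+E}{1-(A+H+E)}\,\varepsilon_k .
\]
The constant $L:=\frac{1+B+E}{1-(A+H+E)}$ is finite and independent of $k$, because $A+H+E\le\sigma<1$.

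The only detail to handle is that Theorem~\ref{thm:UH} is stated for $\varepsilon>0$, while some $\varepsilon_k$ may vanish. I see two ways around this. One is to note that the proof of Theorem~\ref{thm:UH} never uses $\varepsilon>0$: it actually shows $r\le L\,\delta$ with $\delta=d(u,Tu)$. Alternatively, if $D(u_k,Tu_k)=0$, then $u_k$ is an observed $\varepsilon$-solution for every $\varepsilon>0$, so $d(u_k,x^\ast)\le L\varepsilon$ for all $\varepsilon>0$. Hence $d(u_k,x^\ast)=0$, which is again $\le L\varepsilon_k$.

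Combining both cases gives $d(u_k,x^\ast)\le L\,\varepsilon_k$ for all $k$. Letting $k\to\infty$ yields $d(u_k,x^\ast)\to0$, that is, $u_k\to x^\ast$ in $(X,d)$ in the sense of Definition~\ref{def:conv}. There is no real obstacle here. The point worth noting is that the uniform Ulam--Hyers constant does all the work, and no $D$-convergence or continuity of $T$ is needed.
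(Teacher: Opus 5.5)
Your proposal is correct and follows the paper's own proof: apply \eqref{eq:UH} with $\varepsilon_k:=D(u_k,Tu_k)$ and use that the constant $\frac{1+B+E}{1-(A+H+E)}$ does not depend on $k$. Your handling of indices with $\varepsilon_k=0$ is a detail the paper leaves implicit, and both of your ways of dealing with it are valid.
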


\begin{proof}
	Apply \eqref{eq:UH} with $\varepsilon=\varepsilon_k:=D(u_k,Tu_k)\to0$.
\end{proof}

\begin{theorem}[Data dependence]\label{thm:dd}
	Let $\widetilde T:X\to X$ be any mapping possessing at least one fixed
	point $\widetilde x$, and suppose the observed discrepancy between the
	operators is uniformly bounded:
	\[
	\sup_{x\in X} D\left(Tx,\widetilde Tx\right)\le\eta .
	\]
	Then
	\begin{equation}\label{eq:dd}
		d(x^\ast,\widetilde x)\ \le\
		\frac{1+C+H}{\,1-(A+H+E)\,}\;\eta .
	\end{equation}
\end{theorem}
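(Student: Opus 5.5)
We work entirely in the exact metric $d$, exactly as in the proof of Theorem~\ref{thm:UH}. Put $r:=d(x^\ast,\widetilde x)$. The fixed point property $\widetilde T\widetilde x=\widetilde x$ and the triangle inequality for $d$ give
\[
r=d(Tx^\ast,\widetilde T\widetilde x)\le d(Tx^\ast,T\widetilde x)+d(T\widetilde x,\widetilde T\widetilde x).
\]
Since $d\le D$, the second term is at most $D(T\widetilde x,\widetilde T\widetilde x)\le\eta$. Only the ordered pair $(T\widetilde x,\widetilde T\widetilde x)$ is used here, so no symmetry of $D$ is needed.

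For the first term we apply \eqref{eq:HR-d} of Lemma~\ref{lem:conversion} with $x=x^\ast$ and $y=\widetilde x$. The choice of ordered pair matters, because it decides which of the coefficients $B,C,H,E$ multiply the residual. With $x=x^\ast$ we have $d(x^\ast,Tx^\ast)=0$, so the $B$ term vanishes. Set $\delta:=d(\widetilde x,T\widetilde x)=d(\widetilde T\widetilde x,T\widetilde x)\le\eta$. Then the remaining terms are handled as follows:
\begin{itemize}
\item the $C$ term is $d(\widetilde x,T\widetilde x)=\delta$;
\item the $H$ term satisfies $d(x^\ast,T\widetilde x)\le r+\delta$;
\item the $E$ term is $d(\widetilde x,Tx^\ast)=d(\widetilde x,x^\ast)=r$.
\end{itemize}
Together these give $d(Tx^\ast,T\widetilde x)\le (A+H+E)\,r+(C+H)\,\delta$.

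Substituting this into the first inequality yields
\[
r\le (A+H+E)\,r+(1+C+H)\,\eta .
\]
Since $A+H+E\le\sigma<1$, rearranging gives \eqref{eq:dd}.

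The main obstacle is bookkeeping rather than any genuine difficulty. The residual $\delta$ must be bounded through $\eta$ using the correct orientation of $D$, namely $D(T\widetilde x,\widetilde T\widetilde x)$, which matches the hypothesis $\sup_x D(Tx,\widetilde Tx)\le\eta$. The slots must also be assigned so that the coefficient produced is $1+C+H$, which the ordering $(x^\ast,\widetilde x)$ achieves. The swapped form \eqref{eq:HR-d-swap} would instead produce $B$ and $E$, and could be used if a different constant were desired.
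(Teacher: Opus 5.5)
Your proof is correct and follows the paper's argument essentially step for step: the same triangle-inequality split, the same application of \eqref{eq:HR-d} with $(x,y)=(x^\ast,\widetilde x)$, and the same slot bookkeeping, which yields $(A+H+E)\,r+(C+H)\,\eta$. Your side remark is also right: using \eqref{eq:HR-d-swap} instead would give the alternative constant $\frac{1+B+E}{1-(A+H+E)}$.
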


\begin{proof}
	Put $r:=d(x^\ast,\widetilde x)$ and note first that
	\begin{equation}\label{eq:dd0}
		d(\widetilde x,T\widetilde x)
		=d\left(\widetilde T\widetilde x,\;T\widetilde x\right)
		\le D\left(T\widetilde x,\widetilde T\widetilde x\right)\le\eta ,
	\end{equation}
	where we used $d\le D$ and the symmetry of $d$. By the triangle
	inequality,
	\begin{equation}\label{eq:dd1}
		r\le d\left(x^\ast,T\widetilde x\right)
		+d\left(T\widetilde x,\widetilde x\right)
		\le d\left(Tx^\ast,T\widetilde x\right)+\eta .
	\end{equation}
	Apply \eqref{eq:HR-d} with $x=x^\ast$, $y=\widetilde x$, using
	$d(x^\ast,Tx^\ast)=0$, \eqref{eq:dd0}, and
	$d(x^\ast,T\widetilde x)\le r+\eta$,
	$d(\widetilde x,Tx^\ast)=d(\widetilde x,x^\ast)=r$:
	\[
	d(Tx^\ast,T\widetilde x)
	\le A\,r+C\,\eta+H\,(r+\eta)+E\,r
	=(A+H+E)\,r+(C+H)\,\eta .
	\]
	Substituting into \eqref{eq:dd1} and rearranging (again
	$A+H+E<1$) yields \eqref{eq:dd}.
\end{proof}

\begin{remark}
	Both \eqref{eq:UH} and \eqref{eq:dd} take their input - the residual
	$\varepsilon$ and the operator discrepancy $\eta$ - in the
	observed distance $D$, while producing conclusions in the exact
	metric $d$. This asymmetric formulation, which appears to be new, is
	the practically relevant one. Residuals and discrepancies are computed
	from measured data, whereas the guarantee one seeks concerns the true
	state. To our knowledge these are the first stability and data
	dependence results in perturbed metric spaces.
\end{remark}

\subsection{A Jungck type theorem for weakly compatible pairs}

Recall that a pair $S,T:X\to X$ is weakly compatible (Jungck \cite{Jungck1996}) if $S$ and $T$ commute at their coincidence points, that is, $STz=TSz$ whenever $Sz=Tz$. Every commuting pair is weakly compatible. The converse fails in general.

\begin{theorem}[Jungck type theorem]\label{thm:Jungck}
	Let $(X,D,P)$ be a complete perturbed metric space with exact metric
	$d=D-P$. Let $S,T:X\to X$ satisfy:
	\begin{itemize}
		\item[$(J_1)$] $S$ and $T$ are weakly compatible;
		\item[$(J_2)$] $T(X)\subseteq S(X)$;
		\item[$(J_3)$] $S(X)$ is closed in $(X,d)$;
		\item[$(J_4)$] there exists $\alpha\in[0,1)$ such that
		\[
		D(Tx,Ty)\le\alpha\,D(Sx,Sy),\qquad x,y\in X;
		\]
		\item[$(J_5)$] there exists $\alpha_0\ge0$ such that
		\[
		\alpha\,P(Sx,Sy)\le\alpha_0\,d(Sx,Sy),\qquad x,y\in X,
		\]
		(the Jungck form of condition $(\mathrm{W})$), and
		$\alpha+\alpha_0<1$.
	\end{itemize}
	Then $S$ and $T$ have a unique common fixed point in $X$.
\end{theorem}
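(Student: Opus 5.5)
The plan is to transfer the contractive condition to the exact metric, exactly as in Lemma~\ref{lem:conversion}, and then run the classical Jungck argument in $(X,d)$.

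\emph{Step 0: conversion.} Since $d\le D$ and $D=d+P$, combining $(J_4)$ with $(J_5)$ gives, for all $x,y\in X$,
\[
d(Tx,Ty)\le D(Tx,Ty)\le\alpha\,d(Sx,Sy)+\alpha\,P(Sx,Sy)\le\kappa\,d(Sx,Sy),
\qquad \kappa:=\alpha+\alpha_0<1 .
\]
From this point on only $d$ is used, and no symmetry of $D$ or $P$ is needed.

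\emph{Step 1: Jungck sequence.} Fix $x_0\in X$. Using $(J_2)$, choose inductively $x_{n+1}$ with $Sx_{n+1}=Tx_n$, and set $y_n:=Sx_{n+1}=Tx_n$. Then
\[
d(y_n,y_{n+1})=d(Tx_n,Tx_{n+1})\le\kappa\,d(Sx_n,Sx_{n+1})=\kappa\,d(y_{n-1},y_n),
\]
so $d(y_n,y_{n+1})\le\kappa^n d(y_0,y_1)$. By the geometric-series estimate used in Step~1 of Theorem~\ref{thm:HR}, $(y_n)$ is Cauchy in $(X,d)$. By completeness it converges to some $z$. Since $y_n\in S(X)$ and $S(X)$ is closed by $(J_3)$, we have $z=Su$ for some $u\in X$.

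\emph{Step 2: coincidence point.} Apply the converted inequality:
\[
d(Tu,z)\le d(Tu,Tx_n)+d(y_n,z)\le\kappa\,d(Su,Sx_n)+d(y_n,z)=\kappa\,d(z,y_{n-1})+d(y_n,z)\to0 .
\]
Hence $Tu=Su=z$. This is the step where Example~\ref{ex:counter} warns of danger. Here it is harmless, because we never need $D(Sx_n,Su)\to0$: condition $(J_5)$ has already converted the inequality into a $d$-estimate, and no continuity of $S$ or $T$ is required.

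\emph{Step 3: common fixed point and uniqueness.} By weak compatibility $(J_1)$, $Tz=TSu=STu=Sz$. Then
\[
d(Tz,z)=d(Tz,Tu)\le\kappa\,d(Sz,Su)=\kappa\,d(Tz,z),
\]
so $Tz=z$, and therefore also $Sz=Tz=z$. For uniqueness, if $w$ is another common fixed point, then $d(z,w)=d(Tz,Tw)\le\kappa\,d(Sz,Sw)=\kappa\,d(z,w)$, which forces $w=z$.

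The main point to check is that $(J_5)$ is used only on pairs of the form $(Sx,Sy)$. This is exactly what appears in every application of the converted inequality above, so no global domination is needed.
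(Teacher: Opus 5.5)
Your proposal is correct and follows the paper's proof essentially step for step. Both arguments convert $(J_4)$ and $(J_5)$ into the $d$-contraction $d(Tx,Ty)\le(\alpha+\alpha_0)\,d(Sx,Sy)$, run the Jungck sequence with closedness of $S(X)$, obtain the coincidence point, then apply weak compatibility and the uniqueness estimate, with only the roles of the letters $u$ and $z$ interchanged.
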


\begin{proof}
	Set $q:=\alpha+\alpha_0\in[0,1)$. Conditions $(J_4)$ and $(J_5)$ together imply
	\begin{multline}\label{eq:J-d}
		d(Tx,Ty)\le D(Tx,Ty)\le\alpha\,d(Sx,Sy)+\alpha\,P(Sx,Sy)\\
		\le q\,d(Sx,Sy),\qquad x,y\in X .
	\end{multline}
	
	Fix $x_0\in X$. By $(J_2)$, choose inductively $(x_n)$ with
	$Sx_{n+1}=Tx_n$, and put $y_n:=Tx_n=Sx_{n+1}$. Then, by
	\eqref{eq:J-d}, for $n\ge1$,
	\[
	d(y_n,y_{n+1})=d(Tx_n,Tx_{n+1})\le q\,d(Sx_n,Sx_{n+1})
	=q\,d(y_{n-1},y_n),
	\]
	so $d(y_n,y_{n+1})\le q^{\,n}d(y_0,y_1)$, and, as before, $(y_n)$ is a
	Cauchy sequence in $(X,d)$. Let $y_n\to u$. Since $y_n=Sx_{n+1}\in S(X)$ and
	$S(X)$ is closed with respect to metric $d$, $u\in S(X)$, so there is some $z\in X$ with $Sz=u$.
	
	For $n\ge1$, using \eqref{eq:J-d},
	\begin{align*}
	d(Tz,u)&\le d(Tz,Tx_n)+d(y_n,u)
	\le q\,d(Sz,Sx_n)+d(y_n,u)\\
	&=q\,d(u,y_{n-1})+d(y_n,u)\ \to\ 0 ,
	\end{align*}
	hence $Tz=u=Sz$. Hence, the point $z$ is a coincidence point.
	
	Applying $(J_1)$ at $z$, we get
	\[
	Su=S(Tz)=T(Sz)=Tu .
	\]
	Then, by \eqref{eq:J-d},
	\[
	d(u,Tu)=d(Tz,Tu)\le q\,d(Sz,Su)=q\,d(u,Su)=q\,d(u,Tu),
	\]
	and $q<1$ forces $Tu=u$. Since $Su=Tu$, also $Su=u$. Thus $u$ is a
	common fixed point. \\
	
	If $v$ is another common fixed point, then
	\[
	d(u,v)=d(Tu,Tv)\le q\,d(Su,Sv)=q\,d(u,v),
	\]
	so $u=v$.
\end{proof}

\begin{proposition}[Error estimate for the Jungck scheme]
	\label{prop:Jungck-error}
	Under the assumptions of Theorem~\ref{thm:Jungck}, with
	$y_n=Tx_n=Sx_{n+1}$ and $u$ the common fixed point,
	\[
	d(y_n,u)\ \le\ \frac{q^{\,n}}{1-q}\;D(y_0,y_1),\qquad n\ge0 .
	\]
\end{proposition}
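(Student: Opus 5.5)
The plan is to mirror the proof of Proposition~\ref{prop:error}, now using the geometric decay established in the proof of Theorem~\ref{thm:Jungck}. There, combining $(J_4)$ and $(J_5)$ gave the exact-metric contraction \eqref{eq:J-d}. Applied to consecutive terms of the Jungck sequence $y_n=Tx_n=Sx_{n+1}$, it yields
\[
d(y_k,y_{k+1})\le q\,d(y_{k-1},y_k)\qquad(k\ge1),
\]
and hence $d(y_k,y_{k+1})\le q^{\,k}d(y_0,y_1)$ for all $k\ge0$, where $q=\alpha+\alpha_0<1$. Since $P\ge0$ we have $d\le D$, so this becomes
\[
d(y_k,y_{k+1})\le q^{\,k}D(y_0,y_1),
\]
which is the observable quantity we want on the right-hand side.

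Next, for $m>n$ the triangle inequality for $d$ gives
\[
d(y_n,y_m)\le\sum_{k=n}^{m-1}d(y_k,y_{k+1})\le D(y_0,y_1)\sum_{k=n}^{m-1}q^{\,k}\le\frac{q^{\,n}}{1-q}\,D(y_0,y_1).
\]
I then let $m\to\infty$. In the proof of Theorem~\ref{thm:Jungck} the limit $u$ of $(y_n)$ in $(X,d)$ was shown to be the common fixed point, and it is unique. Hence $d(y_m,u)\to0$, and the estimate $d(y_n,u)\le d(y_n,y_m)+d(y_m,u)$ passes to the limit to give the claimed bound.

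There is no genuine obstacle. The only points requiring care are these:
\begin{itemize}
\item identifying the $d$-limit of the particular sequence $(y_n)$ with $u$, which follows from uniqueness of the common fixed point together with the construction in Theorem~\ref{thm:Jungck}, independently of the choice of $x_0$ and of the preimages $x_{n+1}$;
\item the case $n=0$, which is included since the bound $d(y_k,y_{k+1})\le q^{k}d(y_0,y_1)$ holds trivially at $k=0$;
\item the replacement of $d(y_0,y_1)$ by $D(y_0,y_1)$, which uses only $d\le D$ and needs no symmetry of $D$.
\end{itemize}
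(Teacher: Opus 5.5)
Your proof is correct and follows essentially the same route as the paper: geometric decay $d(y_k,y_{k+1})\le q^{k}d(y_0,y_1)$ from \eqref{eq:J-d}, a telescoping sum via the triangle inequality, the limit $m\to\infty$, and the replacement of $d(y_0,y_1)$ by $D(y_0,y_1)$ using $d\le D$. You also spell out why the $d$-limit of $(y_n)$ is the common fixed point $u$: the construction in Theorem~\ref{thm:Jungck} shows this limit is a common fixed point, and uniqueness then identifies it with $u$. The paper leaves this step implicit.
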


\begin{proof}
	For $m>n$, $d(y_n,y_m)\le\sum_{k=n}^{m-1}q^{\,k}d(y_0,y_1)$. Let
	$m\to\infty$ and use $d(y_0,y_1)\le D(y_0,y_1)$.
\end{proof}

\begin{remark}\label{rem:Jungck-dominated}
	If $D$ is controlled by $d$ with constant $M\ge1$, then
	$P\le(M-1)d$, so $(J_5)$ holds with $\alpha_0=(M-1)\alpha$ and the
	condition $\alpha+\alpha_0<1$ becomes $\alpha M<1$. The dominated Jungck type theorem is therefore a special case of
	Theorem~\ref{thm:Jungck}. As in Remark~\ref{rem:W-weaker}, condition
	$(J_5)$ is strictly weaker than domination: it constrains the
	perturbation only on pairs of $S$-images. If $P\equiv 0$ and
	$S=\mathrm{id}_X$, Theorem~\ref{thm:Jungck} reduces
	to the Banach contraction principle. If $P\equiv0$, it reduces to the
	classical Jungck theorem \cite{Jungck1976} in the weakly compatible
	form \cite{Jungck1996}.
\end{remark}

\begin{example}
	Let $X=\mathbb{R}$ and define $d$, $P$, $D$ exactly as in
	Example~\ref{ex:HR-nonsym}. Let
	\[
	S(x)=x,\qquad T(x)=\lambda x,
	\]
	where $\lambda\in[0,1)$ and $(1+\beta)\lambda<1$. Then $S,T$ commute
	(hence are weakly compatible), $T(X)\subseteq S(X)=X$, and, as in
	Example~\ref{ex:HR-nonsym}, $D(Tx,Ty)=\lambda\,D(Sx,Sy)$. Since
	$P\le\beta\,d$, condition $(J_5)$ holds with
	$\alpha_0=\lambda\beta$, and
	$\alpha+\alpha_0=\lambda(1+\beta)<1$. Hence
	Theorem~\ref{thm:Jungck} applies, even though $D$ is not symmetric and the unique common fixed point is $0$.
\end{example}

Several directions remain open. First, condition $(\mathrm{W})$ already localizes the control of $P$ to the coefficient-weighted pairs. It would be
interesting to determine whether purely asymptotic conditions - for
instance, the vanishing condition $P(x_n,x)\to0$ along $d$-convergent
sequences, which repairs the limiting step in the Banach case
(see below \ Remark~\ref{rem:NP}) - suffice for the full Hardy-Rogers class,
where the cross terms lack a triangle inequality in $D$. Second, multivalued
versions of the present results, with the discrepancy of
Theorem~\ref{thm:dd} measured by a Pompeiu-Hausdorff functional built from
$D$, appear tractable. Third, the optimality of the constants in
\eqref{eq:UH} and \eqref{eq:dd} - in the spirit of the optimal
Ulam-Hyers moduli recently studied in the $b$-metric setting - is a
natural quantitative question, especially in view of
Proposition~\ref{prop:structure}$(b)$, which places controlled observed
distances inside the quasi-$b$-metric family with coefficient $M$.

\bigskip

\noindent
Du\v{s}an Bajovi\'c \\
University of Banja Luka, \\
Faculty of Electrical Engineering, \\
Bosnia and Herzegovina. \\
e-mail: dusan.bajovic@etf.unibl.org

\bigskip

\noindent
Zoran Mitrovi\'c \\
University of Banja Luka, \\
Faculty of Electrical Engineering, \\
Bosnia and Herzegovina. \\
e-mail: zoran.mitrovic@etf.unibl.org

\bigskip

\noindent
Boris Petkovi\'c \\
University of Banja Luka, \\
Faculty of Natural Sciences and Mathematics, \\
Bosnia and Herzegovina. \\
e-mail: boris.petkovic@pmf.unibl.org

\end{document}